\numberwithin{equation}{section}
\newtheorem{theorem}{Theorem}[section]
\newtheorem{lemma}[theorem]{Lemma}
\newtheorem{proposition}[theorem]{Proposition}
\newtheorem{remark}[theorem]{Remark}
\renewcommand\tilde{\widetilde}
\def\R{\mathbb{R}}
\def\E{\mathcal{E}}
\def\H{\mathcal{H}}
\def\LM#1{\hbox{\vrule width.2pt \vbox to#1pt{\vfill \hrule width#1pt
height.2pt}}}
\def\LL{{\mathchoice {\>\LM7\>}{\>\LM7\>}{\,\LM5\,}{\,\LM{3.35}\,}}}
\def\restr{{\LL}}
\renewcommand{\phi}{\varphi}
\def\1{\mathbf{1}}
\def\Xint#1{\mathchoice
{\XXint\displaystyle\textstyle{#1}}%
{\XXint\textstyle\scriptstyle{#1}}%
{\XXint\scriptstyle\scriptscriptstyle{#1}}%
{\XXint\scriptscriptstyle\scriptscriptstyle{#1}}%
\!\int}
\def\XXint#1#2#3{{\setbox0=\hbox{$#1{#2#3}{\int}$ }
\vcenter{\hbox{$#2#3$ }}\kern-.57\wd0}}
\def\dashint{\Xint-}
\def\eps{\varepsilon}
\renewcommand{\subset}{\subseteq}
\def\lt{\left}
\def\rt{\right}
\def\les{\lesssim}
\def\ges{\gtrsim}
\def\E{\mathcal{E}}
\begin{document}
\title{An $\eps-$regularity result for  optimal transport maps between continuous densities}
\author{M. Goldman
\thanks{Universit\'e de Paris, CNRS, Sorbonne-Universit\'e,  Laboratoire Jacques-Louis Lions (LJLL), F-75005 Paris, 
France, \texttt{michael.goldman@u-paris.fr}}} 
\maketitle
\begin{abstract}\noindent
The aim of this short note is to extend the recent variational proof of partial regularity for optimal transport maps to the case of continuous densities.
\end{abstract}

\section{Introduction}
The aim of this short note is to extend the partial regularity result for optimal transport maps  obtained in \cite{GO} to the case of continuous densities (rather than H\"older continuous).  
 The interest lies in the proof rather than in the result in itself since it is known to hold under the weaker assumption that the densities are bounded from above and below \cite{FigKim}.
  Indeed, we show that for the squared Euclidean cost, both the variational approach to
  regularity theory for the Monge-Amp\`ere equation recently developed in \cite{GO,GHO,OM} and the one of \cite{DePFig} lead to the same result.
 We must however emphasize that the major achievement of \cite{DePFig} is the treatment of arbitrary cost functions (see \cite{OPR} for the extension of the variational approach to that setting).
 Our main $\eps-$regularity theorem is the following (compare with \cite[Th. 4.3]{DePFig}):
\begin{theorem}\label{theoepsintro}
 Let $\rho_0$ and $\rho_1$ be  densities with compact support\footnote{we assume compactness of the supports for simplicity. The statement is valid as soon as an optimal transport map exists (see \cite{Viltop}).} and  $T$ be the optimal transport map from $\rho_0$ to $\rho_1$ for the squared Euclidean cost on $\R^d$. For every $\alpha\in (0,1)$, there exists $\eps(\alpha,d)>0$ such that if  for some $R>0$,
 \begin{equation*}
 \frac{1}{(2R)^{d+2}}\int_{B_{2R}} |T-x|^2\rho_0+\|1-\rho_0\|_{L^\infty(B_{2R})}^2+\|1-\rho_1\|_{L^\infty(B_{2R})}^2\le \eps, 
 \end{equation*}
 then $T$ is of class $C^{0,\alpha}$ in $B_{R}$.
\end{theorem}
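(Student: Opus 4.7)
The strategy is to follow the Campanato-iteration scheme of the variational approach to partial regularity of optimal transport maps developed in \cite{GO,GHO,OM}, adapted to continuous (rather than H\"older) densities.

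\emph{Step 1 (one-step harmonic approximation).} The core ingredient should be a one-step excess decay: for every $\tau\in(0,1/2)$ there ought to exist $\eps_0=\eps_0(\tau,d)>0$ such that whenever
$$
\mathcal{E}(R):=\frac{1}{R^{d+2}}\int_{B_R}|T-x|^2\rho_0+\|1-\rho_0\|^2_{L^\infty(B_R)}+\|1-\rho_1\|^2_{L^\infty(B_R)}\le\eps_0,
$$
there exists $b\in\R^d$ with
$$
\frac{1}{(\tau R)^{d+2}}\int_{B_{\tau R}}|T-x-b|^2\rho_0\le C\tau^{2}\,\mathcal{E}(R).
$$
As in \cite{GO}, $b$ is morally $\nabla\Phi(0)$ for a harmonic replacement $\Phi$ of $T-x$ on $B_R$, and the factor $\tau^2$ comes from the interior $C^1$-smoothness of $\Phi$. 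The substantive novelty relative to \cite{GO} is to carry out this approximation using only $\|1-\rho_i\|_{L^\infty}$: in \cite{GO} the analogous error term is bounded by a H\"older seminorm of $\rho_i-1$, and one now seeks to replace it by an $L^\infty$ bound, presumably via a finer energy comparison at the level of the Benamou-Brenier action of $T$ and its harmonic surrogate.

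\emph{Step 2 (Campanato iteration).} Fix $\alpha\in(0,1)$, pick $\tau=\tau(\alpha,d)$ small enough that $C\tau^{2}<1$, and iterate Step~1. For each $x_0\in B_R$ this yields increments $b_k\in\R^d$ and accumulated shifts $B_k=\sum_{j<k}b_j$ controlling the excess at scales $R_k=\tau^k R$. To keep the iteration within the smallness regime $\le\eps_0$ at every step and every center, one uses uniform continuity of $\rho_0,\rho_1$ on the compact $\overline{B_{2R}}$ to get
$$
\delta(x_0,r):=\|1-\rho_0\|^2_{L^\infty(B_r(x_0))}+\|1-\rho_1\|^2_{L^\infty(B_r(x_0))}\longrightarrow 0 \quad \text{as } r\to 0, \text{ uniformly in } x_0\in B_R.
$$
With the initial $\eps$ chosen small enough in terms of $\alpha$ and $d$, a standard discrete-Gronwall argument then delivers a uniform-in-$k$ bound on the excess $R_k^{-d-2}\int_{B_{R_k}(x_0)}|T-x-B_k|^2\rho_0$ and summability $\sum_k|b_k|\lesssim R$ of the increments.

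\emph{Step 3 (Campanato conclusion).} The uniform excess bound together with the Cauchy property of $B_k$, via the standard Campanato-Meyers characterization
$$
[T]_{C^{0,\alpha}(B_R)}^2 \;\sim\; \sup_{x_0\in B_R,\, r\le R/2}\;r^{-2\alpha}\min_{c\in\R^d}r^{-d}\int_{B_r(x_0)}|T-c|^2,
$$
yields the claimed $T\in C^{0,\alpha}(B_R)$ for the preassigned $\alpha$.

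\emph{Main obstacle.} The heart of the argument is Step~1: establishing the harmonic approximation with merely $L^\infty$ control on the densities. Steps~2 and~3 are standard once Step~1 is in hand, with the only proviso that the density modulus $\delta(x_0,r)$ comes without a quantitative rate, so that the iteration closes at a uniform (rather than geometrically decaying) level of the excess - which is precisely what limits the conclusion to $C^{0,\alpha}$ for every $\alpha<1$ instead of $C^{1,\alpha}$.
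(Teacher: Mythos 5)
There is a genuine gap, and it sits exactly at the step you yourself identify as the heart of the argument. The one-step lemma you formulate in Step 1 is false: no choice of a translation $b$ alone can produce a decay factor $\tau^2$ in the excess. Take $\rho_0=\rho_1=1$ and $T(x)=Mx$ with $M$ symmetric positive definite, $\det M=1$ and $|M-Id|^2\le\eps$; this is an optimal map (the gradient of the convex function $\tfrac12 x\cdot Mx$) satisfying the smallness hypothesis, yet for every $r$ and every $b$ one has $r^{-(d+2)}\int_{B_r}|T-x-b|^2\ge r^{-(d+2)}\int_{B_r}|(M-Id)x|^2\sim|M-Id|^2$, because the linear part of $T-x$ is $L^2(B_r)$-orthogonal to constants. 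The excess measured against translates of the identity is thus scale-invariant and cannot decay. This is precisely why the proof renormalizes at each step by an affine map $x\mapsto M(x-b)$ with $M=e^{-\nabla^2\phi(0)/2}$ and $b=\nabla\phi(0)$, where $\phi$ is the harmonic approximation of Proposition \ref{prop:Lagestim}: the second-order part of $\phi$ must be absorbed into a change of variables (the ``excess improvement by tilting'' of Proposition \ref{iter}), not into the error term. The genuinely new difficulty for merely $L^\infty$-close densities then appears in the iteration (Theorem \ref{P3}): since $|M_k-Id|^2\lesssim\E_{k-1}+\eps$ and the density contribution $\eps$ does not decay along the iteration, the tilts do not converge to the identity, and the accumulated matrices $A_k=M_k\cdots M_1$ may grow like $(1+C\sqrt{\eps})^k$. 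This growth is absorbed into the gap between the exponents $2$ and $2\alpha$ by choosing $\eps$ small depending on $\alpha$ --- which is where the loss from $C^{1,\alpha}$ to $C^{0,\alpha}$ actually occurs, not (as you suggest) in the absence of a quantitative rate for a modulus of continuity of the densities.

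Two smaller points. First, the theorem assumes nothing beyond $L^\infty$ smallness of $1-\rho_i$ on $B_{2R}$, so the uniform continuity you invoke in Step 2 is neither available nor needed: smallness propagates to all sub-balls by monotonicity of the $L^\infty$ norm under inclusion (one only has to check that the affinely transformed balls remain inside $B_1$, which is what \eqref{estimBkprime} and \eqref{estimAk} guarantee). Second, as a sanity check against your Steps 2--3: if your translation-only decay did hold with a uniform-in-$k$ excess bound, the Campanato characterization would deliver $C^{0,1}$ (Lipschitz) rather than $C^{0,\alpha}$ for $\alpha<1$, since $\min_c\dashint_{B_r(x_0)}|T-c|^2\lesssim \dashint_{B_r(x_0)}|T-x-b|^2+r^2\lesssim r^2$. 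That the theorem only claims $C^{0,\alpha}$ for $\alpha<1$ is another indication that the translation-only scheme cannot be the right mechanism.
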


With this $\eps$-regularity result at hand and arguing as for \cite[Th. 1.1]{GO}, it is not hard to prove that  $T$ is a $C^{0,\alpha}$ homeomorphism outside of a set of measure zero if $\rho_0$ and $\rho_1$ are continuous.
\begin{theorem}\label{theo:final}
 For $E$ and $F$ two bounded open sets, let $\rho_0:E\to \R^+$ and $\rho_1:F\to \R^+$ be two continuous  densities with equal masses, both bounded  and bounded away from zero and let $T$ be the optimal transport map between $\rho_0$ and $\rho_1$. 
Then, there exist
 open sets $E'\subset E $ and $F'\subset F$
 with $|E\backslash E'|=|F\backslash F'|=0$ and such that  for every $\alpha\in (0,1)$, $T$ is a $C^{0,\alpha}$ homeomorphism between $E'$ and $F'$.
\end{theorem}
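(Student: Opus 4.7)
The plan is to follow the pattern of [GO, Th.~1.1] and reduce Theorem~\ref{theo:final} to a pointwise application of the $\eps$-regularity Theorem~\ref{theoepsintro}. Define
\[
E'_0 := \{x \in E : T \in C^{0,\alpha}(U) \text{ for some open neighbourhood } U \text{ of } x\},
\]
which is open by construction. The core of the argument is to show that $|E \setminus E'_0| = 0$; since $T^{-1}$ is itself optimal from $\rho_1$ to $\rho_0$ for the squared cost, applying the identical argument in reverse yields an analogous open full-measure set $F'_0 \subset F$, and setting $E' := E'_0 \cap T^{-1}(F'_0)$, $F' := T(E')$ then finishes the proof.

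To establish full measure of $E'_0$, I would verify the hypothesis of Theorem~\ref{theoepsintro} at almost every $x_0 \in E$ by an affine rescaling adapted to the local linearization of $T$. Writing $T = \nabla u$ with $u$ convex (Brenier), Alexandrov's theorem provides, at a.e.\ $x_0$, a symmetric positive-semidefinite matrix $A := \nabla^2 u(x_0)$ satisfying the pointwise expansion $u(x_0+h) = u(x_0) + T(x_0)\cdot h + \tfrac12 h^T A h + o(|h|^2)$. Since $\rho_0, \rho_1$ are continuous and bounded away from both $0$ and $\infty$, the pointwise Monge--Amp\`ere identity $\det A = \rho_0(x_0)/\rho_1(y_0)$ holds at such $x_0$ (with $y_0 := T(x_0)$), and the eigenvalues of $A$ lie in a compact subset of $(0,\infty)$. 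For $s > 0$ small, the affine rescaling $\tilde x := s^{-1} A^{1/2}(x-x_0)$, $\tilde y := s^{-1} A^{-1/2}(y-y_0)$, combined with the natural multiplicative renormalization of the densities, produces an optimal transport map $\tilde T_s$ between continuous densities $\tilde\rho_{0,s}, \tilde\rho_{1,s}$ with $\tilde\rho_{i,s}(0) = 1$, $\tilde T_s(0) = 0$, and $\nabla \tilde T_s(0) = I$.

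For any fixed $R > 0$, continuity of $\rho_0$ at $x_0$ and of $\rho_1$ at $y_0$ yields $\|1-\tilde\rho_{i,s}\|_{L^\infty(B_{2R})} \to 0$ as $s \downarrow 0$. The integral term transforms, after undoing the change of variables, into a constant multiple of $s^{-d-2}\int_{B_{Cs}(x_0)} |T(x) - y_0 - A(x-x_0)|^2 \dv{x}$. This $L^2$-averaged approximate-differentiability quantity is the technical heart of the argument and the main obstacle, since Alexandrov's theorem only gives the pointwise second-order expansion of $u$, not an $L^2$ bound on $T = \nabla u$. I would resolve this using convexity: the rescaled potentials $u_s(y) := s^{-2}\bigl(u(x_0+sy) - u(x_0) - T(x_0)\cdot sy\bigr)$ are convex and, by Alexandrov, converge uniformly on $B_1$ to $\tfrac12 y^T A y$; the classical fact that uniform convergence of convex functions implies $L^p_{\mathrm{loc}}$ convergence of their gradients then yields $\nabla u_s \to A y$ in $L^2(B_1)$, which is exactly the bound we need after unscaling. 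With the smallness hypothesis verified for $s$ small enough, Theorem~\ref{theoepsintro} applied to $\tilde T_s$ gives $\tilde T_s \in C^{0,\alpha}(B_R)$, hence $T \in C^{0,\alpha}$ in a neighbourhood of $x_0$, so $x_0 \in E'_0$.
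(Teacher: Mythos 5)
The paper gives no standalone proof of Theorem~\ref{theo:final}; it simply invokes the argument of \cite[Th.~1.1]{GO}, and your proposal is exactly that argument: blow up at a.e.\ point using Alexandrov's theorem and the pointwise Monge--Amp\`ere equation, verify the smallness hypothesis of Theorem~\ref{theoepsintro} via the $L^2$-convergence of gradients of uniformly convergent convex functions, and assemble the homeomorphism by symmetrizing with the inverse map. Your identification of the $L^2$ approximate-differentiability step as the crux, and its resolution through convexity, is precisely how \cite{GO} handles it, so the proposal is correct and follows the paper's intended route.
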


The proof of Theorem \ref{theoepsintro} follows very closely the proof of \cite[Th. 1.2]{GO}. 
It is based on a Campanato iteration scheme which uses at his heart an harmonic approximation result (see Proposition \ref{prop:Lagestim}). The main difference with \cite[Th. 1.2]{GO} 
lies in the iteration argument (see Theorem \ref{P3} below). Indeed, for continuous densities
the linear part of the affine transformations introduced in the excess improvement by tilting estimate do not necessarily converge to the identity. This causes the possible blow-up of the $C^{1,\alpha}-$norms.

\section{Notation}\label{sec:not}
In the paper we will use the following notation. The symbols $\sim$, $\ges$, $\les$ indicate estimates that hold up to a global constant $C$,
which typically only depends on the dimension $d$ and the H\"older exponent $\alpha$ (if applicable). 
For instance, $f\les g$ means that there exists such a constant with $f\le Cg$,
$f\sim g$ means $f\les g$ and $g\les f$. An assumption of the form $f\ll1$ means that there exists $\eps>0$, typically only
depending on dimension and the H\"older exponent, such that if $f\le\eps$, 
then the conclusion holds.  
We write $|E|$ for the  Lebesgue measure of a set $E$.    
When no confusion is possible, we will drop the integration measures in the integrals. 
For $R>0$ and $x_0\in \R^d$, $B_R(x_0)$ denotes the ball of radius $R$ centered in $x_0$. 
When $x_0=0$, we will simply write $B_R$ for $B_R(0)$. We will also use the notation
\[\dashint_{B_R} f:=\frac{1}{|B_R|}\int_{B_R} f.\]
Let $\rho_0$ and $\rho_1$ be two  densities with compact support in $\R^d$  and equal mass. We say that  $T$ is an optimal transport map between $\rho_0$ and $\rho_1$ if it 
 minimizes  
\begin{equation}\label{prob:OT}
W^2(\rho_0,\rho_1):= \min_{T\sharp \rho_0=\rho_1} \int_{\R^d} |T-x|^2 \rho_0,
\end{equation}
where by a slight abuse of notation $T\sharp \rho_0$ denotes the push-forward by $T$ of the measure $\rho_0 dx$. We refer the reader to \cite{Viltop} for the existence, uniqueness and characterization of such maps.


%
%

\section{Proof of Theorem \ref{theoepsintro}}  

Let $T$ be the minimizer of \eqref{prob:OT}. As in \cite{GO}, the proof of Theorem \ref{theoepsintro} is based on the decay properties of the excess energy
\begin{equation}\label{def:excess}
 \E(\rho_0,\rho_1,T,R):=R^{-2}\dashint_{B_R} |T-x|^2 \rho_0.
\end{equation}
As already alluded to, the main ingredient for the proof of Theorem \ref{theoepsintro} is the following harmonic approximation result (which by scaling we state for $R=1$).

\begin{proposition}\label{prop:Lagestim}
 For every $0<\tau\ll1$, there exists $\eps(\tau,d)>0$ and $C(\tau,d)>0$ such that if
 \begin{equation}\label{hypsmall}
  \E(\rho_0,\rho_1,T,1)+ \|1-\rho_0\|_{L^\infty(B_1)}^2+\|1-\rho_1\|_{L^\infty(B_1)}^2\le \eps,
 \end{equation}
then there exists  a  function $\phi$ harmonic in $B_{1/2}$, such that 
 \begin{equation}\label{eq:estimdistharmLag}
  \int_{B_{1/2}} |T-(x+\nabla \phi)|^2 \rho_0\le \tau \E(\rho_0,\rho_1,T,1)+ C \lt(\|1-\rho_0\|_{L^\infty(B_1)}^2+\|1-\rho_1\|_{L^\infty(B_1)}^2\rt)
 \end{equation}
 and 
 \begin{equation}\label{eq:energieestimphiLag}
  \sup_{B_{1/2}} |\nabla \phi|^2\les \E(\rho_0,\rho_1,T,1)+\|1-\rho_0\|_{L^\infty(B_1)}^2+\|1-\rho_1\|_{L^\infty(B_1)}^2.
 \end{equation}

\end{proposition}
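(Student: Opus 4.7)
The plan is to follow the Eulerian harmonic approximation strategy of \cite{GO}, weakening the H\"older-type control on the densities to mere $L^\infty$-closeness to $1$. The backbone is the Benamou--Brenier lift: set $T_t(x):=x+t(T(x)-x)$, $\rho_t:=(T_t)_\sharp\rho_0$, and define the velocity $v_t\circ T_t:=T-x$ and the Eulerian flux
\[
j(x):=\int_0^1 (\rho_t v_t)(x)\,dt.
\]
Integrating the continuity equation $\partial_t\rho_t+\nabla\cdot(\rho_t v_t)=0$ in time yields $\nabla\cdot j=\rho_0-\rho_1$ on $\R^d$, while Cauchy--Schwarz along the geodesic together with the Benamou--Brenier identity give $\int|j|^2\les \E(\rho_0,\rho_1,T,1)$ on $B_1$.

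Next, via a Fubini selection using $\int_{1/2}^{3/4}\int_{\partial B_r}|j\cdot\nu|^2\,dr\le\int_{B_1}|j|^2\les\E$, pick a radius $r\in(1/2,3/4)$ on which $j\cdot\nu$ has a well-controlled $L^2$ trace. Define $\phi$ as the solution of the Neumann problem
\[
\Delta\phi=0\text{ in }B_r,\qquad \partial_\nu\phi=j\cdot\nu-\dashint_{\partial B_r}j\cdot\nu\text{ on }\partial B_r,
\]
which is compatible by the divergence theorem. Then $\nabla\phi$ is the $L^2(B_r)$-projection of $j$ onto gradients, and Helmholtz orthogonality of gradients and solenoidal fields with vanishing normal trace gives
\[
\int_{B_r}|j-\nabla\phi|^2\les \|\nabla\cdot j\|_{H^{-1}(B_r)}^2\les\|1-\rho_0\|_{L^\infty(B_1)}^2+\|1-\rho_1\|_{L^\infty(B_1)}^2.
\]
The interior bound \eqref{eq:energieestimphiLag} then follows from the mean-value property applied to $|\nabla\phi|^2$ on $B_{1/2}\subset B_r$ together with $\|\nabla\phi\|_{L^2(B_r)}^2\les\int_{B_r}|j|^2\les\E$.

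To transfer the Eulerian bound to the Lagrangian side \eqref{eq:estimdistharmLag}, two ingredients enter: (i) a classical localization step showing $T_t(B_{1/2})\subset B_r$ for all $t\in[0,1]$ once $\E\ll 1$, using the $L^\infty$-control of Brenier maps at small scales under bounded densities; (ii) the approximate identity $j(x)\simeq (T(x)-x)\rho_0(x)$, which follows from $\rho_t=\rho_0+O(\|1-\rho_0\|_{L^\infty}+\|1-\rho_1\|_{L^\infty}+|T-x|)$ along the geodesic, a consequence of the bound on $\|1-\rho_i\|_{L^\infty}$ and the displacement smallness. Combining these, $\int_{B_{1/2}}|T-x-\nabla\phi|^2\rho_0$ is bounded by $\int_{B_r}|j-\nabla\phi|^2$ up to cross terms absorbed via Young's inequality into $\tau\E+C(\tau)\bigl(\|1-\rho_0\|_{L^\infty}^2+\|1-\rho_1\|_{L^\infty}^2\bigr)$.

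The principal obstacle is precisely this Eulerian-to-Lagrangian passage in the merely-continuous regime. In \cite{GO} the H\"older modulus of $\rho_i$ permits direct absorption of the mismatch between $j$ and $(T-x)\rho_0$ into a small multiple of $\E$, benefiting from a radius-dependent decay. Here only the $L^\infty$ distance to $1$ is available, so the mismatch must be carefully split into a $\tau\E$-piece (absorbed by Young's inequality) and an irreducible $\|1-\rho_i\|_{L^\infty}^2$-piece carried by the constant $C(\tau,d)$. Tracking this split, in particular for cross terms involving $(\rho_t-1)v_t$ and for the boundary correction used in the compatibility condition of the Neumann problem, is where the argument departs slightly from \cite{GO}.
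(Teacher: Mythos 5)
Your construction of $\phi$ is in fact the same as the paper's: the paper invokes \cite[Th.~1.5]{GHO} (after observing that $W^2\bigl(\rho_i\restr B_1,\tfrac{\rho_i(B_1)}{|B_1|}\chi_{B_1}\bigr)+\bigl(\tfrac{\rho_i(B_1)}{|B_1|}-1\bigr)^2\les\|1-\rho_i\|_{L^\infty(B_1)}^2$) to get a potential $\Phi$ with $\Delta\Phi=c$ and $\nu\cdot\nabla\Phi=\nu\cdot\int_0^1 j$, and then sets $\phi=\Phi-\tfrac{c}{2d}|x|^2$; since $c=\dashint_{B_R}(\rho_0-\rho_1)$, this is exactly your harmonic Neumann problem with mean-subtracted boundary data. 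The difference is that the paper outsources the hard estimate to \cite{GHO}, whereas you attempt to prove it directly --- and the step where you do so is wrong. The claim
\[
\int_{B_r}|j-\nabla\phi|^2\les\|\nabla\cdot j\|_{H^{-1}(B_r)}^2
\]
does not follow from Helmholtz orthogonality. The decomposition $j=\nabla\psi+\sigma$ with $\sigma$ divergence-free and of vanishing normal trace gives $\int_{B_r}|j-\nabla\psi|^2=\int_{B_r}|\sigma|^2$, and the solenoidal part $\sigma$ is in no way controlled by $\nabla\cdot j$: a large rotational field has zero divergence and zero normal trace and is far from every gradient. Smallness of the divergence of $j$ costs you nothing towards \eqref{eq:estimdistharmLag}.

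What actually forces $\sigma$ to be small is the \emph{optimality} of $(\rho,j)$ for the Benamou--Brenier functional: one must construct a competitor for the Eulerian problem out of $\nabla\Phi$ (glued to $j$ across a boundary layer near $\partial B_r$, which is why the radius selection is needed), deduce the upper bound $\int_0^1\int\frac{|j_t|^2}{\rho_t}\le\int_{B_r}|\nabla\Phi|^2+\text{errors}$, and combine it with the elementary lower bound in the reverse direction; the defect in this quasi-orthogonality then dominates $\int|j-\nabla\Phi|^2$ up to the admissible error terms. This variational argument is the ``long and intricate'' core of \cite[Th.~1.5]{GHO} (equivalently \cite[Prop.~3.5]{GO}) and cannot be replaced by an $L^2$ projection. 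The remaining elements of your outline --- the Fubini selection of $r$, the compatibility correction, the localization $T_t(B_{1/2})\subset B_r$, and the splitting of the density mismatch into $\tau\,\E(\rho_0,\rho_1,T,1)+C(\tau)\bigl(\|1-\rho_0\|^2_{L^\infty}+\|1-\rho_1\|^2_{L^\infty}\bigr)$ --- are consistent with the direct proof sketched in the paper's Remark; note that the one ingredient that genuinely changes relative to \cite{GO} in the merely continuous setting is the replacement of the H\"older--Schauder bound by Lemma~\ref{lem:Poi}, which your sketch does not use.
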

\begin{proof}
 To simplify notation let $\mathcal{E}:= \E(\rho_0,\rho_1,T,1)$ and $D:=\|1-\rho_0\|_{L^\infty(B_1)}^2+\|1-\rho_1\|_{L^\infty(B_1)}^2$. 
 The claim is an almost direct application of \cite[Th. 1.5]{GHO}. Notice first that for $i=0,1$,
 \begin{equation}\label{W2infty}
  W^2\lt(\rho_i\restr B_1, \frac{\rho_i(B_1)}{|B_1|}\chi_{B_1}\rt)+ \lt(\frac{\rho_i(B_1)}{|B_1|}-1\rt)^2\les \|1-\rho_i\|_{L^\infty(B_1)}^2.
 \end{equation}
 Therefore, \cite[Th. 1.5]{GHO} gives the existence of a radius $R\in (3/4,4/5)$, a constant $c\in \R$ and a couple $(\rho,j)$ solving in the distributional
 sense the continuity equation\footnote{note that $(\rho,j)$ is actually the solution of the Eulerian version of \eqref{prob:OT}, see \cite{GHO}.} 
 \begin{equation}\label{conteq}
  \partial_t\rho+ \nabla\cdot j=0 \ \textrm{ on } \R^d\times(0,1) \qquad \textrm{ and } \qquad \rho(\cdot,0)=\rho_0, \ \rho(\cdot,1)=\rho_1
 \end{equation}
such that the following holds. If $\Phi$ solves
\[
 \Delta \Phi= c \ \textrm{ in } B_{R} \qquad \textrm{ and } \qquad \nu \cdot \nabla \Phi=\nu\cdot \int_0^1 j dt \ \textrm{ on } \partial B_{R},
\]
where $\nu$ denotes the external normal to $\partial B_R$, then 
\[
 \int_{B_{1/2}}|T- (x+\nabla \Phi)|^2 \rho_0 \le \tau \E + C D
\]
and 
\[
 \sup_{B_{1/2}} |\nabla \Phi|^2 \les \E+D.
\]
Using \eqref{conteq} and integration by parts, it is readily seen that we must have 
\[
 c=\dashint_{B_R} (\rho_0-\rho_1)
\]
and thus $|c|^2\les D$. Taking $\phi:= \Phi-\frac{c}{2d}|x|^2$ and using triangle inequality we get \eqref{eq:estimdistharmLag} and \eqref{eq:energieestimphiLag}.
\end{proof}
\begin{remark}
 Instead of appealing to \cite[Th. 1.5]{GHO} whose proof is quite long and intricate, one could alternatively
 give a direct proof of Proposition \ref{prop:Lagestim} following almost verbatim
  the proof of  \cite[Prop. 3.5]{GO}. One would only need  to replace the use of \cite[Lem. 2.2]{GO}, which required the densities to be H\"older continuous, by Lemma \ref{lem:Poi} below.
With respect to \eqref{eq:estimdistharmLag}, this would lead to the slightly more quantitative statement
\[
  \int_{B_{1/2}} |T-(x+\nabla \phi)|^2 \rho_0\les \E(\rho_0,\rho_1,T,1)^{\frac{d+2}{d+1}}+\|1-\rho_0\|_{L^\infty(B_1)}^2+\|1-\rho_1\|_{L^\infty(B_1)}^2.
\]
\end{remark}

\begin{lemma}\label{lem:Poi}
For $g\in L^\infty({B}_1)$, every solution  solution $\phi$ of 
\[
  \Delta \phi=g \ \textrm{ in } B_{1} \qquad \textrm{ and } \qquad \nu \cdot \nabla \phi=\frac{1}{\H^{d-1}(\partial B_1)}\int_{B_1} g \ \textrm{ on } \partial B_1, 
\]
satisfies
\[
\sup_{B_{1}}|\nabla\phi|^2  \les \|g\|_{L^\infty(B_1)}^2.
\]
\end{lemma}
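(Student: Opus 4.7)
\textbf{Proof plan for Lemma \ref{lem:Poi}.}
The plan is to reduce the problem to a homogeneous Neumann problem by peeling off an explicit quadratic correction whose Laplacian matches the mean of $g$. Let $\bar g := \dashint_{B_1} g$. Since $\H^{d-1}(\partial B_1) = d|B_1|$, the prescribed boundary value simplifies to
\[
 \frac{1}{\H^{d-1}(\partial B_1)}\int_{B_1} g \;=\; \frac{\bar g}{d},
\]
and the Neumann compatibility condition is automatically satisfied. I would therefore set $\psi(x) := \frac{\bar g}{2d}|x|^2$, a direct computation giving $\Delta \psi = \bar g$, $\nu\cdot\nabla \psi = \bar g/d$ on $\partial B_1$, and $\sup_{B_1}|\nabla \psi| \les \|g\|_{L^\infty(B_1)}$.

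The function $u := \phi - \psi$ then solves the homogeneous Neumann problem
\[
 \Delta u = g - \bar g \ \text{ in } B_1, \qquad \nu\cdot\nabla u = 0 \ \text{ on } \partial B_1,
\]
with right-hand side still controlled by $\|g\|_{L^\infty(B_1)}$. It remains to establish the estimate $\sup_{B_1}|\nabla u|^2 \les \|g\|_{L^\infty(B_1)}^2$, after which the triangle inequality $|\nabla \phi| \le |\nabla u| + |\nabla \psi|$ closes the argument.

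For the bound on $\nabla u$ I would invoke standard elliptic regularity for the Neumann problem on the (smooth) ball: normalizing $u$ to have zero mean, the Calder\'on--Zygmund type estimate gives $\|u\|_{W^{2,p}(B_1)} \les \|g - \bar g\|_{L^p(B_1)} \les \|g\|_{L^\infty(B_1)}$ for any $p<\infty$. Choosing $p>d$ and using the Sobolev embedding $W^{2,p}(B_1) \hookrightarrow C^{1,\alpha}(\overline{B_1})$ with $\alpha = 1 - d/p$ yields $\sup_{B_1}|\nabla u| \les \|g\|_{L^\infty(B_1)}$. A purely explicit alternative would be to represent $u$ via the Neumann Green function $N(x,y)$ of the ball, for which $|\nabla_x N(x,y)| \les |x-y|^{1-d}$, and then estimate
\[
 |\nabla u(x)| \;\le\; \int_{B_1} |\nabla_x N(x,y)|\,|g(y)-\bar g|\,dy \;\les\; \|g\|_{L^\infty(B_1)} \sup_{x \in B_1} \int_{B_1} |x-y|^{1-d}\,dy \;\les\; \|g\|_{L^\infty(B_1)}.
\]

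The only mildly delicate point is the regularity step for $u$: since $L^\infty$ is not a Calder\'on--Zygmund endpoint, one cannot conclude $u \in W^{2,\infty}$ directly, so it is important to pass through $W^{2,p}$ for some finite $p>d$ (or to use the explicit singular kernel bound of the Neumann Green function). Everything else is either algebraic cancellation or the built-in compatibility of the prescribed boundary data, so this is where the real content of the lemma sits.
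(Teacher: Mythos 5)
Your proof is correct, but it takes a different route from the paper's. The paper disposes of the lemma in one line by citing global Schauder estimates in Morrey--Campanato spaces (Troianello, Th.~3.16(iii)): since $g\in L^\infty(B_1)$ implies $g\in L^{2,d-2(1-\alpha)}(B_1)$ for every $\alpha<1$, the Neumann solution has $\nabla\phi\in C^{0,\alpha}(\overline{B_1})$, hence bounded gradient. You instead first peel off the explicit quadratic $\psi=\frac{\bar g}{2d}|x|^2$ to reduce to homogeneous Neumann data (a step the paper skips, since the cited theorem handles the inhomogeneous constant boundary datum directly), and then invoke $W^{2,p}$ Calder\'on--Zygmund theory for $p>d$ together with the Sobolev embedding $W^{2,p}\hookrightarrow C^{1,1-d/p}$ --- or, alternatively, the kernel bound $|\nabla_x N(x,y)|\les|x-y|^{1-d}$ for the Neumann function of the ball. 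All three arguments correctly navigate the same endpoint issue ($L^\infty$ data does not give $W^{2,\infty}$, but does give a gradient bound); your $W^{2,p}$ route uses only the most standard form of elliptic regularity at the cost of an extra embedding step, your Green-function variant is the most self-contained for the ball, and the paper's Morrey-space argument is the shortest to state but relies on a less commonly quoted theorem. Your identification of the endpoint failure of Calder\'on--Zygmund as the ``real content'' of the lemma is exactly right.
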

\begin{proof}
 This follows from global Schauder estimates \cite[Th. 3.16 (iii)]{Troianello} and the fact that if $g\in L^\infty(B_1)$, then $g$ is in the Morrey space $L^{2,d-2(1-\alpha)}(B_1)$ for every $0<\alpha<1$ (see \cite{Troianello}).
\end{proof}

We now prove that  as in \cite[Prop. 3.6]{GO}, this estimate implies an ``excess improvement by tilting''-estimate. Even though the proof is similar to the one in \cite{GO}, we include it for the reader's convenience.  

\begin{proposition}\label{iter}
 For every $\beta\in (0,1)$ there exist  $\eps(d,\beta)>0$, $\theta=\theta(d,\beta)>0$ and $C_\theta(d,\beta)>0$ with the property that for every $R>0$ such that
 \begin{equation}\label{hyp:Erho}\E(\rho_0,\rho_1,T,R)+\|1-\rho_0\|^2_{L^\infty(B_R)}+\|1-\rho_1\|^2_{L^\infty(B_R)}\le \eps,\end{equation}
 there exist a symmetric matrix $M$ with $\det M=1$ and a vector $b$ with 
\begin{equation}\label{boundBc}|M-Id|^2 +\frac{1}{R^2} |b|^2\les\E(\rho_0,\rho_1,T,R)+\|1-\rho_0\|^2_{L^\infty(B_R)}+\|1-\rho_1\|^2_{L^\infty(B_R)},\end{equation}
such that, letting  $\hat x:=M^{-1} x$,  $\hat y:= M(y-b)$ and then 
\begin{equation}\label{defhat} 
\hat{T}(\hat x):= M(T(x)-b), \quad \hat{\rho}_0(\hat x):= \rho_0(x) \quad  \textrm{ and } \quad \hat \rho_1(\hat y):= \rho_1(y),
\end{equation}
we have 
\begin{equation}\label{improvementE}
 \E(\hat \rho_0,\hat \rho_1,\hat{T},\theta R)\le \theta^{2\beta} \E(\rho_0,\rho_1,T,R)+C_\theta\lt(\|1-\rho_0\|^2_{L^\infty(B_R)}+\|1-\rho_1\|^2_{L^\infty(B_R)}\rt).
\end{equation}
\end{proposition}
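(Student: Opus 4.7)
The plan is to combine the harmonic approximation of Proposition \ref{prop:Lagestim} with a Campanato-type tilting argument. By scaling $(T(x),\rho_i(x))\mapsto(R^{-1}T(Rx),\rho_i(Rx))$ I reduce to $R=1$ and abbreviate $\mathcal{E}:=\E(\rho_0,\rho_1,T,1)$, $D:=\|1-\rho_0\|_{L^\infty(B_1)}^2+\|1-\rho_1\|_{L^\infty(B_1)}^2$. I would apply Proposition \ref{prop:Lagestim} with a small parameter $\tau=\tau(\beta,d)$ to produce a harmonic $\phi$ on $B_{1/2}$ with $T(x)\approx x+\nabla\phi(x)$, and then read off the tilting $(M,b)$ from the $1$-jet of $\nabla\phi$ at the origin so that the leading-order linear behaviour of $x+\nabla\phi$ is absorbed into the affine change of variables.

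For the construction I would set $b:=\nabla\phi(0)$ and $M:=\exp\bigl(-\tfrac{1}{2}D^2\phi(0)\bigr)$. Since $D^2\phi(0)$ is symmetric and traceless (the latter because $\phi$ is harmonic), $M$ is symmetric and $\det M=\exp(-\tfrac12\Delta\phi(0))=1$; this exact unit-determinant condition is crucial so that the transformed densities $\hat\rho_i$ in \eqref{defhat} retain the same mass. Standard interior estimates for harmonic functions together with \eqref{eq:energieestimphiLag} yield
\[ |\nabla\phi(0)|^2+|D^2\phi(0)|^2+\sup_{B_{1/4}}|D^3\phi|^2\les \mathcal{E}+D, \]
from which \eqref{boundBc} is immediate at $R=1$, and then by scaling in general.

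Next I would estimate $\E(\hat\rho_0,\hat\rho_1,\hat T,\theta)$ by performing the change of variables $x=M\hat y$ (of unit Jacobian) in the defining integral, noting that $M(B_\theta)\subset B_{2\theta}\subset B_{1/2}$ once $\eps$ is small, and splitting the integrand of $|\hat T(\hat y)-\hat y|^2=|MT(x)-Mb-M^{-1}x|^2$ by the triangle inequality into the harmonic-approximation piece $|M(T(x)-(x+\nabla\phi(x)))|^2$ and the affine-remainder piece $|(M-M^{-1})x+M(\nabla\phi(x)-b)|^2$. The first piece is controlled directly by \eqref{eq:estimdistharmLag} and contributes at most $\theta^{-d-2}(\tau\mathcal{E}+CD)$ to the new excess. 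For the second piece, the algebraic identity $(M-M^{-1})+MD^2\phi(0)=O(|D^2\phi(0)|^2)$, obtained by expanding the exponential, combined with Taylor's remainder $\nabla\phi(x)-\nabla\phi(0)-D^2\phi(0)x=O(|x|^2\sup|D^3\phi|)$, gives a pointwise bound of order $(\mathcal{E}+D)^2|x|^2+(\mathcal{E}+D)|x|^4$; integrated over $B_{2\theta}$ and divided by $\theta^{d+2}$ this contributes at most $(\mathcal{E}+D)^2+\theta^2(\mathcal{E}+D)\les \theta^{2\beta}\mathcal{E}+C_\theta D$, using $\mathcal{E}+D\le\eps\le\theta^{2\beta}$. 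Choosing first $\theta=\theta(d,\beta)$ small so that $C\theta^{2-2\beta}\le\tfrac14$ and $C\eps\le\tfrac14\theta^{2\beta}$, and then $\tau$ small so that $C\tau\theta^{-d-2-2\beta}\le\tfrac12$, produces \eqref{improvementE}.

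The main obstacle I anticipate is the bookkeeping in the affine-remainder estimate: every cross-term coming from $M\neq I$ must collapse either into $\theta^{2\beta}\mathcal{E}$ or into $C_\theta D$. A cruder choice such as $M=I-\tfrac12 D^2\phi(0)$ would only give $\det M=1+O(|D^2\phi|^2)$, breaking the mass-preservation of $\hat\rho_0$ and $\hat\rho_1$ and polluting the estimate with residual $(\mathcal{E}+D)$ terms that cannot be absorbed into $\theta^{2\beta}\mathcal{E}$ for $\beta$ close to $1$; using the matrix exponential is what makes $\det M=1$ exact. A secondary subtlety is that only the $L^\infty$-norms of $1-\rho_i$ enter the estimate (through $D$), rather than any gradient or H\"older seminorm, which is precisely what is needed to extend the H\"older-density result of \cite{GO} to merely continuous densities.
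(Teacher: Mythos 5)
Your proposal is correct and follows essentially the same route as the paper: the same choice $b=\nabla\phi(0)$, $M=e^{-\frac12 D^2\phi(0)}$ (with $\det M=1$ via harmonicity), the same decomposition into a harmonic-approximation piece and a Taylor/exponential-expansion remainder, and the same order of fixing $\theta$ then $\tau$. The only cosmetic difference is that the paper first factors out $M^{-1}$ to compare $T$ with $M^{-2}x+b$ rather than splitting $M(T-b)-M^{-1}x$ directly.
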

\begin{proof}
By a rescaling $\tilde{x}= R^{-1}x$, which amounts to the re-definition $\tilde{T}(\tilde x):=R^{-1} T(R\tilde x)$ (which preserves optimality)
and $\tilde b:= R^{-1} b$, we may assume that $R=1$. \\
As above, we introduce the notation 
\[\E:=\E(\rho_0,\rho_1,T,1) \qquad \textrm{ and  } \qquad D:=\|1-\rho_0\|^2_{L^\infty(B_1)}+\|1-\rho_1\|^2_{L^\infty(B_1)}.\]
Let $\tau\in(0,1)$ to be fixed later and then $\phi$ be the harmonic function given by Proposition \ref{prop:Lagestim}. Define $b:=\nabla \phi(0)$, $A:= \nabla^2\phi(0)$ and set $M:=e^{-A/2}$, so that $\det M=1$. 
 Using \eqref{eq:energieestimphiLag}
from Proposition \ref{prop:Lagestim} and the mean value property for harmonic functions, 
we see that \eqref{boundBc} is satisfied.

 Defining $\hat \rho_i$ and $\hat T$ as in \eqref{defhat}  we have by  (\ref{boundBc}) and \eqref{hyp:Erho}
\begin{align*}
\dashint_{B_{\theta}} |\hat{T}-\hat{x}|^2 \hat \rho_0&= \dashint_{M B_{\theta}} | M (T-b)-M^{-1} x|^2 \rho_0\\
&\les \dashint_{B_{2\theta}} | T-(M^{-2} x+b)|^2\rho_0\\
&\les \dashint_{ B_{2\theta}}|T-(x+\nabla \phi)|^2\rho_0+\dashint_{ B_{2\theta}}|(M^{-2}-Id-A)x|^2\rho_0\\
&\qquad 
+\dashint_{ B_{2\theta}}|\nabla \phi-b-Ax|^2\rho_0\\
&\les \dashint_{ B_{2\theta}}|T-(x+\nabla \phi)|^2\rho_0+ \theta^2|M^{-2}-Id-A|^2
+\sup_{ B_{2\theta}}|\nabla \phi-b-Ax|^2.
\end{align*}
Recalling $M=e^{-A/2}$, $A=\nabla^2\phi(0)$, and $b=\nabla\phi(0)$, we obtain
\begin{align*}
\theta^{-2}\dashint_{B_{\theta}} |\hat{T}-x|^2\hat \rho_0
&\stackrel{\eqref{eq:estimdistharmLag}}{\les} \theta^{-(d+2)}\lt(\tau \E+ C_\tau D\rt) +|\nabla^2 \phi(0)|^4+\theta^2 \sup_{B_{2\theta}} |\nabla^3 \phi|^2\\
&\stackrel{\eqref{eq:energieestimphiLag}}{\les} \theta^{-(d+2)}\lt(\tau \E+ C_\tau D\rt)+  \lt(\E+D\rt)^2+\theta^2\lt(\E+D\rt)\\
&\les \lt( \tau \theta^{-(d+2)} + \theta^2\rt)\E + C_\tau \theta^{-(d+2)}D,
 \end{align*}
 where we used the harmonicity of $\nabla \phi$ and the fact that $\E+D\ll \theta^2$ (recall that $\theta$ has not been fixed yet).
We may thus find a constant $C(d)>0$ such that 
\[
 \theta^{-2}\dashint_{B_\theta} |\hat{T}-x|^2\hat{\rho}_0\le C\lt( \tau \theta^{-(d+2)} + \theta^2\rt)\E + C_\tau \theta^{-(d+2)}D.
\]
 We now fix $\theta(d,\beta)$ such that $C\theta^2 \le \frac{1}{2} \theta^{2\beta}$, which is possible because $\beta<1$. We finally choose $\tau\ll1$ such that also 
 $ C \tau \theta^{-(d+2)}\le \frac{1}{2} \theta^{2\beta}$, which concludes the proof of \eqref{improvementE}. 
\end{proof}

We may finally prove our main $\eps-$regularity result. As already pointed out in the introduction, it is in this iteration argument that the proof departs from the one in \cite{GO}. Indeed,
under the assumption that the densities are merely continuous, the distance to the identity of the linear transformations $M$ in \eqref{boundBc} are not decaying and we need to compensate
the possible blow-up of the cumulated linear transformations by downgrading the $C^{1,\alpha}$ estimates to $C^{0,\alpha}$ estimates.  A similar argument is used in \cite{DePFig}.  
Notice that the Campanato iteration itself is somewhat simpler  here compared to \cite[Prop. 3.7]{GO} since we do not need to introduce an extra dilation factor at every step to propagate
the smallness assumption on the data.   
\begin{theorem}\label{P3}
 For every $\alpha\in(0,1)$, if 
 \begin{equation}\label{hyp:ErhoR}
 \E(\rho_0,\rho_1,T,2R)+\|1-\rho_0\|_{L^\infty(B_{2R})}^2+\|1-\rho_1\|_{L^\infty(B_{2R})}^2\ll 1, 
 \end{equation}
 then $T$ is of class $C^{0,\alpha}$ in $B_{R}$.

\end{theorem}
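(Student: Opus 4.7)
The plan is to iterate Proposition \ref{iter} at every point $x_0\in B_R$ along a geometric sequence of scales $\theta^k R$. After rescaling, WLOG $R=1$; fix $\beta\in(\alpha,1)$, which fixes $\theta=\theta(d,\beta)$. Applying Proposition \ref{iter} inductively at step $k$ produces a symmetric matrix $M_k$ with $\det M_k=1$, a vector $b_k$, and new data $(\rho_0^{(k)},\rho_1^{(k)},T^{(k)})$ satisfying $\E_k\le\theta^{2\beta}\E_{k-1}+C_\theta D_{k-1}$, with $\E_k:=\E(\rho_0^{(k)},\rho_1^{(k)},T^{(k)},\theta^k)$ and $D_k$ the analogue of $D_0$ for $(\rho_0^{(k)},\rho_1^{(k)})$ on $B_{\theta^k}$. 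Provided that the cumulated coordinate change $\Pi_k:=M_1M_2\cdots M_k$ keeps $\theta^k\|\Pi_k\|$ bounded by $2R$ (verified in the next paragraph), the $L^\infty$ hypothesis \eqref{hyp:ErhoR} transports to $D_k\le D$, where $D:=\|1-\rho_0\|_{L^\infty(B_{2R})}^2+\|1-\rho_1\|_{L^\infty(B_{2R})}^2$, and iterating yields
\[
\E_k\les\theta^{2\beta k}\E_0+C_\theta D.
\]
In particular, this stays below the smallness threshold of Proposition \ref{iter}, so the iteration continues indefinitely.

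The main obstacle is that $|M_k-\mathrm{Id}|^2\les\E_{k-1}+D$ does not tend to zero (because of the floor term $D$), so $\Pi_k$ need not converge to the identity. Summing the individual perturbations gives only the subexponential bound
\[
\|\Pi_k\|,\ \|\Pi_k^{-1}\|\les \exp\bigl(C_2 k\sqrt D\bigr),
\]
which in turn controls $\theta^k\|\Pi_k\|\le 2R$ for all $k\ge 0$ when $D$ is small enough. Translating the iteration back to the original frame, on balls of radius $\sim\theta^k/\|\Pi_k^{-1}\|$ centered at $x_0$ the map $T$ is $L^2(\rho_0)$-close to an affine function $a_k(x)=L_kx+c_k$ whose linear part, built from $\Pi_k$, may itself grow subexponentially. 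This blow-up rules out $C^{1,\alpha}$ regularity and forces the downgrade to $C^{0,\alpha}$, exactly as discussed in the preamble and in \cite{DePFig}.

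To conclude via Campanato's characterization, I fix $x_0\in B_R$ and take $c=a_k(x_0)$ in
\[
r^{-2\alpha}\dashint_{B_r(x_0)}|T-c|^2.
\]
Splitting $|T-a_k(x_0)|^2\le2|T-a_k|^2+2|L_k(x-x_0)|^2$ and using the measure-preserving change of variable $\det\Pi_k=1$, each piece is bounded by a power of $r$ times a polynomial factor in $\|\Pi_k\|$, $\|\Pi_k^{-1}\|$ and either $\E_k$ or $1$. Picking $\beta>\alpha$ makes the factor $\theta^{2\beta k}\E_0$ harmless against the power $\theta^{-2\alpha k}$ coming from $r^{-2\alpha}$; the decisive step is to dominate the subexponential factors above by $\theta^{-k(1-\alpha)}$, which amounts to an explicit inequality of the form $C_2\sqrt D\le c(1-\alpha)\log(1/\theta)$. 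Since $\theta$ depends only on $\alpha$ and $d$, this can be enforced by taking the smallness constant $\eps=\eps(\alpha,d)$ in \eqref{hyp:ErhoR} sufficiently small. Running the resulting estimate through all scales $r\in(0,R)$ and all base points $x_0\in B_R$ yields the Campanato bound, and hence $T\in C^{0,\alpha}(B_R)$.
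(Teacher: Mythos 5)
Your proposal is correct and follows essentially the same route as the paper: iterate Proposition \ref{iter}, observe that the cumulated change of variables only grows subexponentially like $(1+C\sqrt{\eps})^k$ because the tilts do not decay, compare $T$ to a constant (rather than an affine map) in the Campanato seminorm, and absorb the subexponential factors by taking the smallness constant in \eqref{hyp:ErhoR} small depending on $1-\alpha$. The only cosmetic difference is your restriction $\beta>\alpha$, which is harmless but not needed, since the decisive decay $r^{2}$ comes from the linear part of the comparison affine map and beats $r^{2\alpha}$ for any $\beta\in(0,1)$.
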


\begin{proof}
By scale invariance, we may assume that $R=1$. Let us fix $\alpha\in (0,1)$. By Campanato's theory, see \cite[Th. 5.5]{Giaquinta}, we have to prove that \eqref{hyp:ErhoR} implies
\begin{equation}\label{toproveC1alpha}
 \sup_{x_0\in B_1} \sup_{r\le \frac{1}{2}} \min_{b} \frac{1}{r^{2\alpha}} \dashint_{B_r(x_0)}|T-b|^2 \les  1. 
\end{equation}
Let us first notice that \eqref{hyp:ErhoR} implies that for every $x_0\in B_1$
 \begin{equation}\label{Ex0}
  \E:=\dashint_{B_1(x_0)} |T-x|^2\rho_0 \ll 1 \qquad \textrm{and} \qquad \|1-\rho_0\|^2_{L^\infty(B_1(x_0))}+\|1-\rho_1\|^2_{L^\infty(B_1(x_0))}\ll1 .
 \end{equation}
 Therefore, in order to prove \eqref{toproveC1alpha}, it is enough to show that \eqref{Ex0} implies that for $r\le \frac{1}{2}$,
 \begin{equation}\label{toproveEx0}
  \min_{b}  \dashint_{B_r(x_0)}|T-b|^2\les r^{2\alpha}.
 \end{equation}
Without loss of generality we may now assume that $x_0=0$. To simplify notation, we  let
\[
 \eps:= \E+\|1-\rho_0\|^2_{L^\infty(B_1)}+\|1-\rho_1\|^2_{L^\infty(B_1)}.
\]

Fix from now on  $\beta\in(0,1)$ and let $\theta(d,\beta)$ be given by Proposition \ref{iter}. Thanks to \eqref{Ex0}, Proposition \ref{iter} applies and 
there exist a (symmetric) matrix $M_1$ of unit determinant and a vector $b_1$ such that $T_1(x):=  B_1(T(M_1x)-b_1)$, $\rho_0^1(x):=\rho_0(M_1 x)$ and $\rho_1^1(x):=\rho_1(M_1^{-1} x +b_1)$ satisfy
 \begin{equation}\label{estimEk1}
 \E_1:=\E(\rho_0^1,\rho_1^1,T_1,\theta )\le \theta^{2\beta} \E+C_\theta \lt(\|1-\rho_0\|^2_{L^\infty(B_1)}+\|1-\rho_1\|^2_{L^\infty(B_1)}\rt)\le (\theta^{2\beta} +C_\theta)\eps.
 \end{equation}
 If $T$ is a  minimizer of \eqref{prob:OT}, then so is  $T_1$ with $(\rho_0,\rho_1)$ replaced by $(\rho_0^1,\rho_1^1)$. Indeed,  because $\det M_1=1$, $T_1$ sends $\rho_0^1$ on $\rho_1^1$  and if $T$ is the gradient of a convex function $\psi$
 then  $T_1=\nabla \psi_1$ where $\psi_1(x):=\psi(M_1x)-b_1  \cdot M_1 x$ is also a convex function, which characterizes optimality \cite[Th. 2.12]{Viltop}. 
 Moreover, since by \eqref{boundBc}, we have $|M_1-Id|^2\les \eps$ and $|b_1|^2\les \theta^2 \eps$, if $\eps$ is small enough then $ M_1 B_{\theta}\subset B_1$ and $M_1^{-1} B_\theta +b_1 \subset B_1$, 
 \begin{equation}\label{preservboundHolder}
  \|1-\rho_0^1\|_{L^\infty(B_\theta)}^2+\|1-\rho_1^1\|_{L^\infty(B_\theta)}^2\le \|1-\rho_0\|_{L^\infty(B_1)}^2+\|1-\rho_1\|_{L^\infty(B_1)}^2\le \eps.
\end{equation}

 Therefore, we may iterate  Proposition \ref{iter}, $K>1$ times
 to find a sequence of (symmetric) matrices $M_k$ with $\det M_k=1$, 
 a sequence of vectors $b_k$ and a sequence of maps $T_k$ such that setting for $1\le k\le K$,
 \[T_k(x):= M_k(T_{k-1}(M_k x)-b_k), \quad \rho_0^k(x):=\rho_0^{k-1}(M_k x) \quad \textrm{and} \quad \rho_1^{k}(x):=\rho_1^{k-1}(M_k^{-1} x +b_k),\] 
 we have  
 \begin{align}
  \|1-\rho_0^k\|_{L^\infty(B_{\theta^k})}^2+\|1-\rho_1^k\|_{L^\infty(B_{\theta^k})}^2&\le \eps \label{estimrhok}\\
 \E_k:=\E(\rho_0^k,\rho_1^k,T_k,\theta^k)&\le \theta^{2\beta }\E_{k-1}+C_\theta \eps,\label{estimEk}\\
 |M_k-Id|^2&\les\E_{k-1} +\eps, \label{estimBk}\\
 \frac{1}{\theta^{2(k-1)}} |b_k|^2&\les \E_{k-1} +\eps.\label{estimbk}
 \end{align}
A simple induction argument shows that from \eqref{estimEk} we get
\begin{equation}\label{estimEkprime}
 \E_k\le \theta^{2k \beta} \E+C_\theta \sum_{j=0}^{k-1} \theta^{2 \beta j} \eps\les \eps
\end{equation}
and so \eqref{estimBk} and \eqref{estimbk} lead to 
\begin{equation}\label{estimBkprime}
 \max(|M_k|^2, |M_k^{-1}|^2)\le (1+C\sqrt{\eps}) \qquad \textrm{and } \qquad |b_k|^2\les \theta^{2k} \eps.
\end{equation}
In particular, this implies that $M_K B_{\theta^K}\subset B_{\theta^{K-1}}$ and $M_K^{-1} B_{\theta^K} +b_K \subset B_{\theta^{K-1}}$ so that we may keep iterating Proposition \ref{iter}.

 Letting, $A_k:=M_kM_{k-1}\cdots M_1$  and $d_k:= \sum_{i=1}^k  M_k M_{k-1}\cdots M_i b_i$, we see that  $T_k(x)= A_kT( A_k^*x)-d_k$. By \eqref{estimBkprime}, 
\begin{equation}\label{estimAk}
\max(|A_k|^2,|A^{-1}_k|^2)\le  (1+ C\sqrt{\eps})^k.\end{equation}
We first estimate by definition of $T_k$, the fact that $\det A_k=1$ and $\|1-\rho_0^k\|_{L^{\infty}(B_{\theta^k})}\ll 1$,
\begin{align*}
 \dashint_{A_k^*(B_{\theta^k})} |T+A_k^{-1} d_k|^2&\les \dashint_{A_k^*(B_{\theta^k})} |T-A_k^{-1} A_k^{-*}x+A_k^{-1} d_k|^2 +\dashint_{A_k^*(B_{\theta^k})} |A_k^{-1} A_k^{-*}x|^2\\
 &\les\dashint_{B_{\theta^k}} |A_k^{-1} (T_k-x)|^2 +\theta^{2k} |A_k^{-1}|^2\\
 &\les |A_k^{-1}|^2 \lt( \E_k + 1\rt)\theta^{2k}\\
 &\stackrel{\eqref{estimAk}\& \eqref{estimEkprime}}{\les} (1+C\sqrt{\eps})^k \theta^{2k}.
\end{align*}
Now if $\eps$ is small enough,  \eqref{estimAk} yields  $B_{\frac{1}{2}\lt(\frac{\theta}{1+C\sqrt{\eps}}\rt)^k}\subset A_k^* (B_{\theta^k})$ and therefore 

\begin{align*}
 \min_{b}  \dashint_{B_{\frac{1}{2}\lt(\frac{\theta}{1+C\sqrt{\eps}}\rt)^k}}|T-b|^2&\les (1+ C \sqrt{\eps})^{k d} \dashint_{A_k^*(B_{{\theta}^k})}|T+A_k^{-1}d_k|^2\\
 &\les (1+C \sqrt{\eps})^{k(d+2)} \theta^{2k}.
\end{align*}
Since $\alpha$ and $\theta$ are fixed, if $\eps$ is small enough, then $1+C\sqrt{\eps}\le \theta^{-\frac{2(1-\alpha)}{d+2(1+\alpha)}}$ so that 
\[
 \theta^2 (1+C \sqrt{\eps})^{d+2}\le \lt(\frac{\theta}{1+C \sqrt{\eps}}\rt)^{2\alpha}
\]

From this \eqref{toproveEx0} follows, which concludes the proof of \eqref{toproveC1alpha}.
 
\end{proof}

\section*{Acknowledgment}
I warmly thank G. De Philippis for suggesting that our proof from \cite{GO} could cover the case of merely continuous densities.
I also thank A. Zilio for pointing out the reference \cite{Troianello}. Part of this research was supported by the  project 
ANR-18-CE40-0013 SHAPO financed by the French Agence Nationale de la Recherche (ANR) and by the LYSM LIA AMU CNRS ECM INdAM. 
I also thank the Centro de Giorgi in Pisa for its hospitality. 
\bibliographystyle{amsplain}
\bibliography{OT}
 \end{document}